\title{Permutation-based multiple testing when fitting many generalized linear models}
\definecolor{cobalt}{rgb}{0.0, 0.28, 0.67}
\definecolor{darkblue}{rgb}{0.0, 0.0, 0.55}
\definecolor{dgreen}{rgb}{0.0, 0.4, 0.0}
\theoremstyle{plain}
\newtheorem{theorem}{Theorem}
\newtheorem{lemma}[theorem]{Lemma}
\theoremstyle{definition}
\newcommand{\fgamma}{\bm{\gamma}}
\newcommand{\black}{\color{\black}}
\newcommand{\N}{\mathcal{N}}
\author[1]{Riccardo De Santis\footnote{To contact:  \textit{riccardo.desantis2@unisi.it}}}
\author[2]{Jelle J. Goeman}
\author[3]{Samuel Davenport}
\author[4]{Jesse Hemerik}
\author[5]{Livio Finos}
\affil[1]{University of Siena, Italy} 
\affil[2] {Leiden University Medical Center, The Netherlands} 
\affil[3]{Division of Biostatistics, University of California San Diego, United States} 
\affil[4]{Erasmus University Rotterdam, The Netherlands}
\affil[5]{University of Padova, Italy} 
\date{}
\begin{document}
\maketitle

\begin{abstract}
    In many applied sciences a popular analysis strategy for high-dimensional data is to fit many multivariate generalized linear models in parallel. This paper presents a novel approach to address the resulting multiple testing problem by combining a recently developed sign-flip test with permutation-based multiple-testing procedures. Our method builds upon the univariate standardized flip-scores test which offers robustness against misspecified variances in generalized linear models, a crucial feature in high-dimensional settings where comprehensive model validation is particularly challenging. We extend this approach to the multivariate setting, enabling adaptation to unknown response correlation structures. This approach yields relevant power improvements over conventional multiple testing methods when correlation is present.
\end{abstract}

\section{Introduction}
In high-dimensional data, such as neuroimaging and transcriptomics, it is common to fit many generalized linear regression models in parallel, each with a relatively small sample size \citep{schaarsmidt2022multiglm, RnaSeq2014Love, winkler2014permutation, davenport2022fdp}. Usually, the goal of the analysis is to perform hypothesis testing to find relevant associations, typically after adjusting the $p$-values for multiple testing.

This approach comes with several challenges. In the first place, small sample sizes can make classical tests unreliable, especially in nonlinear models where exact methods are not available and tests rely on asymptotic arguments. In such cases, the usual normal approximation of the test statistic can be quite unreliable \citep{schaarsmidt2022multiglm}, especially in the extreme tails that are relevant in the context of multiple testing. 
Secondly, the generalized linear model makes some crucial assumptions which are difficult to check, especially for nonlinear models with small sample size. In particular, the detection of overdispersion among the assumed models can be quite problematic, a problem which is often exaccerbated as the variance is not constant among the observations. Proper model checking in high-dimensional data is further hampered by the sheer number of models that are fit. Finally, the test statistics of the parallel models are often correlated, due to correlations in the underlying biological measurements. Classical multiple testing corrections (such as Bonferroni-Holm) are designed in order to protect against any correlation structure, but can be very conservative in the presence of such correlations  \citep{gao.etal2010permut, goeman2014tutorial, saffari.etal2018permut}. Instead, as we shall see, taking the correlation into account allows for increases in power.

Earlier work in this direction have been addressed by \cite{hothorn2008simultaneous, pipper2012versatile, bretz2016multiple, pallmann2018simultaneous} which
tackle the multiple testing issue -- in contexts that comprise GLMs -- by estimating the correlation structure between the test stiatistics and the inference is based on a parametric approach. Unfortunately, they do not allow to deal cases of very high dimensionality as the ones we are tacking in this work. 

Some solution is offered by resampling-based methods in the restricted field of linear models. \cite{winkler2014permutation} provides a thoughtful review of these methods. The asymptotic control of the type I error is good in most of the practical scenarios; but they are not robust to heteroscedastic errors (i.e. misspecified variances)  \citep{https://doi.org/10.1111/j.1442-9993.2001.01070.pp.x,mcardle2001fitting} or they are robust in practice, but a formal proof is not provided.
The broader field of generalized linear regression models with high dimensional responses offer even less satisfactory tools. As an example, \cite{10.1093/bioinformatics/btn650} propose a solution where the effect of the confounders are accounted by matching units with similar euclidean distance in the space of the confounders. Despite being an effective approach, the solution is ad hoc since the quality of the control of the false positives depends on the correct choice of parameters.

In this work we propose a permutation-based multiple testing procedure in combination with the sign-flip score test \citep{hemerik2020robust, de2022inference}.
The test shows reliable behavior for small sample sizes, and has been shown to be robust against general variance misspecification under minimal assumptions. We start by defining a test for a global null hypothesis about a multivariate regression parameter, which guarantees weak control of the familywise error rate (FWER). We extend this to allow for additional inference by embedding the global test in a closed testing procedure \citep{marcus1976closed}.
This allows us to compute adjusted $p$-values for each individual hypothesis through a multiple testing procedure based on the max-$T$ method of \cite{westfall1993resampling}, which is a dramatic shortcut of the closed testing procedure. In particular this approach guarantees strong control of the FWER, that is, it produces valid adjusted $p$-values for all possible subsets of hypotheses. 

Crucially our approaches provides inference by jointly sign-flipping the score contributions allowing the method to adapt to the unknown correlation structure. This is especially useful when strong correlation between the individual test statistics is present; providing a relevant gain in power over alternative methods, as we will show in a simulation. 

The paper is organized as follows: Section \ref{Sect:univartest} revisits the sign-flip score test for univariate testing; Section \ref{sec:multiv} contains the novel contribution of the paper, that is, the multivariate testing extension. Section \ref{sect:sims} contains a simulation study; Section \ref{sec:concl} contains the conclusions.

Code to implement our methods is available in the flipscores R package \citep{Rflipscores}, available from CRAN. Python and Matlab implementations are also available \citep{pyperm, matperm}. Code to reproduce the results of this paper is available at \url{github.com/livioivil/jointest}.

\section{One dependent variable} \label{Sect:univartest}
In this section we review the derivation of the univariate sign-flip score test as detailed in \citet{hemerik2020robust} and \citet{de2022inference} before we extend to the multi-variable setting in Section \ref{sec:multiv}.

Given $n$ specify a number of units, which could e.g. correspond to the number of subjects in a given analysis. Then for every $1 \leq i \leq n$, let $Y_i$ be the target variable which is assumed to belong to the exponential dispersion family, i.e., with a density of the form \citep{agresti2015foundations}
\begin{equation*} \label{eq:model}
    f(y_i;\theta_i,\phi_i)=
    \exp\left\{\frac{y_i\theta_i-b(\theta_i)}{a(\phi_i)}+c(y_i,\phi_i)\right\},
\end{equation*}
where $\theta_i$ and $\phi_i$ are respectively the canonical and the dispersion parameter while $a(\cdot)$, $b(\cdot)$ and $c(\cdot)$ are known functions. Consequently, the mean and variance of the observed outcome are defined as
\begin{equation*}
\mu_i=E(Y_i)=b'(\theta_i); \qquad
Var(Y_i)=b''(\theta_i)a(\phi_i).
\end{equation*} 

The expected value of the vector $Y=(Y_1,\dots,Y_n)^T$ is assumed to depend on some covariates through the relation
$$\mathbb{E}(Y)=g^{-1}(X\beta+Z\gamma)$$
where $g(\cdot)$ is the link function, the covariate $X$ is an $n\times 1$ matrix, i.e., a column vector, such that $\beta\in\mathbb{R}$. Further, the nuisance covariates $Z$ are a $n\times (k-1)$ matrix. Let $\eta=X\beta+Z\gamma$ define the linear predictor.

Throughout this section we are interested in testing the null hypothesis $H_0:\beta=\beta_0$ against a one or two-sided alternative. The other parameters $\gamma$ (and $\phi_i)$ are thus considered as nuisance parameters. Note that in practice in order to fit generalized linear models it is common to assume that $\phi_i=1$ or $\phi_i=\phi$, a requirement needed to be able to estimate them \citep{agresti2015foundations}. This turns out to an assumption about the variance structure that can be quite restrictive. We do not wish to have this kind of assumption, and so instead rely on score based sign-flipping tests which we introduced in \cite{hemerik2020robust} and \cite{de2022inference} which have robustness against variance misspecification. To do so let
$$D=\mathrm{diag}\left\{\frac{\partial \mu_i}{\partial \eta_i}\right\}; \qquad V=\mathrm{diag}\{\mathrm{Var}(y_i)\}.$$
The diagonal matrix of the GLM weights is then defined as $W=DV^{-1}D$ with entries $w_i= ({\partial \mu_i}/{\partial \eta_i})^2/var(Y_i)$ \citep{agresti2015foundations}.
Finally,
$$P= {W}^{1/2}Z(Z'{W}Z)^{-1}Z'{W}^{1/2}$$
represents the projection matrix for GLMs. Note that $D, V$ and $P$ are functions of the regression parameters $\beta, \gamma$, and we evaluate them under the null hypothesis $H_0$ i.e. with $\beta = \beta_0$ and $\gamma = \gamma_0$.

We will consider a test based on the \emph{effective score} which, when the estimated nuisance parameter $\hat{\gamma}$ is plugged in, is defined as 
\begin{equation*} n^{1/2}S^{*}_{\hat{\gamma}}=X' {W}^{1/2} (I-{P}){V}^{-1/2}(Y-\hat{\mu})=\sum_{i=1}^{n}\nu^{*}_{\hat{\gamma},i}.
\end{equation*}
The effective score can be interpreted as the residual from the projection of the marginal score for $\beta$ onto the space spanned by the nuisance scores \citep{marohn2002comment}. Importantly the effective score can be written as a sum of $n$ elements $\nu^{*}_{\hat{\gamma},i}$, which we call the score contributions. 

In what follows we shall adopt the same model assumptions and estimation strategy as in \cite{de2022inference}. In particular, the link function is crucially assumed to be correctly specified which implies that we are able to consistently estimate the regression parameter $\gamma$.

The test is performed by means of random sign flipping of the score contributions. Define $g_1 =(1, \dots, 1)\in \mathbb{R}^n$ and let $g_2,...,g_w$, for some number of flips $w \in \mathbb{N}$,  be sampled with replacement from  $\{-1, 1\}^n$, unless stated otherwise. The sign-flipped effective score statistics are then defined as
\begin{align*}
S^{*j}_{\hat{\fgamma}}&=n^{-1/2} \sum_{i=1}^{n}   g_{ji}\nu^{*}_{\hat{\fgamma},i},
\end{align*}
where the superscript $j$ denotes that the $j$-th flip $g_j$ has been applied.

For conciseness it is helpful to write the sign-flipped effective score statistic in this matrix notation. To do so note that the effective score is the product of an $1 \times n$ vector $X' {W}^{1/2} (I-{P}){V}^{-1/2}$ and an $n\times 1$ vector $(Y-\hat{\mu})$. For each $1 \leq j \leq w$, letting $G_j$ be the diagonal $n \times n$ matrix with diagonal entries $g_{j1},\dots,g_{jn}$, the sign-flipped test statistic can thus be written as
\begin{equation*}
S^{*,j}_{\hat{\fgamma}}=n^{-1/2}X'{W}^{1/2}(I-{P}){V}^{-1/2}G_j(Y-\hat{\mu}).
\end{equation*}

The test based on sign-flipping effective scores is asymptotically exact, as the following theorem states. This result coincides with Theorem 2 in \citet{hemerik2020robust}.

\begin{theorem}[{Hemerik et al., 2020}] \label{signflipeff}
For every $1\leq j \leq w$, consider the statistic $T_j^n=S^{*j}_{\hat{\fgamma}} $ and let $T^n_{(1)}\leq ...\leq T^n_{(w)}$ be the sorted test-statistics. Consider the test that rejects if $T_1^n > T^n_{\lceil(1-\alpha)w\rceil}$. 
As $n\rightarrow \infty$,   under $H_0$ the rejection probability   converges to 
$\lfloor \alpha w \rfloor /w \leq \alpha$.
\end{theorem}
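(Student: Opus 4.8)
The proof combines three ingredients: a reduction of the feasible statistics (built from the estimate $\hat\gamma$) to oracle statistics (built from the true nuisance $\gamma_0$), a joint central limit theorem for the oracle sign-flipped effective scores, and an exchangeability computation that reads off the nominal level from the resulting i.i.d.\ Gaussian limit.

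First I would eliminate the nuisance estimate. Expanding the feasible contribution around $\gamma_0$, $\nu^{*}_{\hat\gamma,i}=\nu^{*}_{\gamma_0,i}+(\partial_\gamma\nu^{*}_{\gamma_0,i})(\hat\gamma-\gamma_0)+(\text{remainder})$, so that $S^{*j}_{\hat\gamma}=S^{*j}_{\gamma_0}+\big(n^{-1/2}\sum_i g_{ji}\,\partial_\gamma\nu^{*}_{\gamma_0,i}\big)(\hat\gamma-\gamma_0)+o_P(1)$. Consistency of $\hat\gamma$ gives $\hat\gamma-\gamma_0=O_P(n^{-1/2})$, and the bracketed factor is $O_P(1)$: for $j=1$ this is exactly where the \emph{effective} score enters, since its orthogonality to the nuisance directions forces $n^{-1}\sum_i\partial_\gamma\nu^{*}_{\gamma_0,i}$ to have a zero limit, so $n^{-1/2}\sum_i\partial_\gamma\nu^{*}_{\gamma_0,i}=O_P(1)$ by the CLT; for $j\ge 2$, conditionally on the data the $g_{ji}$ are i.i.d.\ Rademacher, so the factor has conditional variance $n^{-1}\sum_i(\partial_\gamma\nu^{*}_{\gamma_0,i})^2=O_P(1)$. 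Hence $S^{*j}_{\hat\gamma}=S^{*j}_{\gamma_0}+o_P(1)$ for each of the (finitely many) flips $j$. This step --- the one carried out in \citet{hemerik2020robust} and \citet{de2022inference} --- is where I expect the main difficulty to lie, since the randomized remainder must be controlled on its own rather than through its signed sum, which is precisely what yields robustness to variance misspecification.

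Next I would prove $(S^{*1}_{\gamma_0},\dots,S^{*w}_{\gamma_0})\xrightarrow{d}(Z_1,\dots,Z_w)$ with the $Z_j$ i.i.d.\ $N(0,\sigma^2)$, $\sigma^2>0$. Conditioning on the data, $S^{*1}_{\gamma_0}=n^{-1/2}\sum_i\nu^{*}_{\gamma_0,i}$ is data-measurable and, unconditionally, obeys the ordinary CLT (under $H_0$ the effective score has mean zero, its contributions are independent across $i$, and $n^{-1}\sum_i\mathrm{Var}(\nu^{*}_{\gamma_0,i})\to\sigma^2$ with a Lindeberg condition); while for $j\ge 2$ the sign vectors are mutually independent, so $E\big[\exp(i\sum_j t_j S^{*j}_{\gamma_0})\mid\text{data}\big]=\exp(it_1 S^{*1}_{\gamma_0})\prod_{j\ge2}\prod_i\cos(t_j n^{-1/2}\nu^{*}_{\gamma_0,i})$. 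Each inner product converges in probability to $\exp(-t_j^2\sigma^2/2)$ (take logarithms and use $n^{-1}\sum_i(\nu^{*}_{\gamma_0,i})^2\to\sigma^2$), and since these products are bounded by $1$, taking expectations and using bounded convergence together with $S^{*1}_{\gamma_0}\xrightarrow{d}N(0,\sigma^2)$ yields $E\exp(i\sum_j t_j S^{*j}_{\gamma_0})\to\prod_{j=1}^w\exp(-t_j^2\sigma^2/2)$, the characteristic function of the i.i.d.\ Gaussian vector. Combined with the first step, $(T_1^n,\dots,T_w^n)\xrightarrow{d}(Z_1,\dots,Z_w)$.

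Finally I would conclude by exchangeability. The limit vector is exchangeable and, being continuous with $\sigma^2>0$, tie-free almost surely, so the map $(t_1,\dots,t_w)\mapsto\mathbbm{1}\{t_1>t_{(\lceil(1-\alpha)w\rceil)}\}$ is continuous at almost every point of the limit law; the portmanteau theorem then gives $P(T_1^n>T^n_{\lceil(1-\alpha)w\rceil})\to P(Z_1>Z_{(\lceil(1-\alpha)w\rceil)})$. By exchangeability and continuity the rank of $Z_1$ among $Z_1,\dots,Z_w$ is uniform on $\{1,\dots,w\}$, whence this probability equals $(w-\lceil(1-\alpha)w\rceil)/w=\lfloor\alpha w\rfloor/w$, using the identity $\lceil(1-\alpha)w\rceil=w-\lfloor\alpha w\rfloor$; and $\lfloor\alpha w\rfloor/w\le\alpha$ gives the asymptotic level control.
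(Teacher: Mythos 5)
Your proposal is correct and follows essentially the same route as the source this theorem is quoted from (Theorem 2 of \citet{hemerik2020robust}, which the paper cites rather than reproves, and which the paper's own Lemma \ref{lemmascorematrix} and Theorem \ref{theorem:multi_test} mirror): reduction to the oracle statistics via $S^{*j}_{\hat{\fgamma}}=S^{*j}_{\fgamma_0}+o_P(1)$ using the orthogonality built into the effective score (for $j=1$) and the centering by random signs (for $j\geq 2$), a joint Gaussian limit with i.i.d.\ components, and the rank/exchangeability lemma giving level $\lfloor \alpha w\rfloor/w$. The only cosmetic difference is that you obtain the joint limit through conditional characteristic functions given the data, whereas the cited proof applies a multivariate CLT directly to the i.i.d.\ vectors of flipped score contributions; both arguments deliver the same i.i.d.\ $N(0,\sigma^2)$ limit.
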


We will now recall the definition of the test proposed in \citet{de2022inference}, which is a recent upgrade of the test above. 
This recent adaptation typically improves the small-sample performance of the test by standardizing the effective score. We call the resulting test statistic the standardized score statistic, defined as
\begin{equation}\label{eq_standardized}
S_{\hat{\fgamma}} = S^*_{\hat{\fgamma}}/Var\{S^*_{\hat{\fgamma}}\}^{1/2},
\end{equation}
where
\begin{equation*} \label{eq_variance}
Var\{S^*_{\hat{\fgamma}}\} = n^{-1}X^T W^{1/2} (I-P)W^{1/2}X.
\end{equation*}
The corresponding sign-flipped test statistic, for a generic flip matrix $G_j$, is defined as
\begin{equation}\label{eq_flipstandard}
S^{j}_{\hat{\fgamma}} = S^{*,j}_{\hat{\fgamma}}/Var\{S^{*,j}_{\hat{\fgamma}}\}^{1/2},
\end{equation}
where 
\begin{equation*} \label{eq_flipvariance}
Var\{S^{*,j}_{\hat{\fgamma}}\} = n^{-1}X^T W^{1/2} (I-P)G_j(I-P)G_j (I-P)W^{1/2}X.
\end{equation*}

In practice, this variance has to be estimated, replacing $W$ with an estimate $\hat{W}=\hat{D}\hat{V}^{-1}\hat{D}$, where $\hat{D},\hat{V}$ are estimates of $D,V$. This introduces an asymptotic approximation of lower order with respect to the parameter estimation. Indeed, in a first-order analysis the use of the true or the estimated variance is equivalent \citep{pace1997principles}. All results in the rest of the paper will be formulated in terms of $W$, but the results will hold up to the first order in terms of $\hat{W}$ \citep{de2022inference}.

The test based on sign-flipping standardized scores is asymptotically exact, as the following theorem states (for a discussion about the faster convergence, with respect to the effective scores, see \cite{de2022inference}). This result coincides with Proposition 1 in \citet{de2022inference}.

\begin{theorem}[{De Santis et al., 2022}] \label{signflipstd}
For every $1\leq j \leq w$, consider the statistic $T_j^n=S^{j}_{\hat{\fgamma}} $ and let $T^n_{(1)}\leq ...\leq T^n_{(w)}$ be the sorted statistics. Consider the test that rejects if $T_1^n > T^n_{\lceil(1-\alpha)w\rceil}$. 
As $n\rightarrow \infty$,   under $H_0$ the rejection probability   converges to 
$\lfloor \alpha w \rfloor /w \leq \alpha$.
\end{theorem}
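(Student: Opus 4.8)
The plan is to reduce the statement to the elementary order-statistic argument that already underlies Theorem~\ref{signflipeff}. It suffices to show that, under $H_0$ and as $n\to\infty$, the vector $(T^n_1,\dots,T^n_w)=(S^{1}_{\hat{\fgamma}},\dots,S^{w}_{\hat{\fgamma}})$ converges in distribution to a vector of $w$ i.i.d.\ real random variables with a continuous marginal law. Granting this, the limit is exchangeable and almost surely tie-free, so the map $(t_1,\dots,t_w)\mapsto\mathbf 1\{t_1>t_{(\lceil(1-\alpha)w\rceil)}\}$ is almost surely continuous at the limiting law; the Portmanteau theorem then gives convergence of the rejection probability to $\Pr\{Z_1>Z_{(\lceil(1-\alpha)w\rceil)}\}$ for i.i.d.\ continuous $Z_1,\dots,Z_w$, which by exchangeability equals $\big(w-\lceil(1-\alpha)w\rceil\big)/w=\lfloor\alpha w\rfloor/w\le\alpha$. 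So everything reduces to the convergence of the standardized vector, and I would handle the numerator and the denominator of $S^{j}_{\hat{\fgamma}}=S^{*,j}_{\hat{\fgamma}}/Var\{S^{*,j}_{\hat{\fgamma}}\}^{1/2}$ separately.

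For the numerators I would reuse the analysis behind Theorem~\ref{signflipeff}: under $H_0$ the vector $(S^{*,1}_{\hat{\fgamma}},\dots,S^{*,w}_{\hat{\fgamma}})$ converges in distribution to $(Z_1,\dots,Z_w)$ i.i.d.\ $\mathcal N(0,\sigma^2)$ with $\sigma^2>0$. The ingredients, all inherited from \citet{hemerik2020robust} and \citet{de2022inference}, are: (i) the effective score is orthogonal to the nuisance score, so $\sqrt n\,S^{*,j}_{\hat{\fgamma}}=\sqrt n\,S^{*,j}_{\fgamma_0}+o_p(1)$ for every flip, i.e.\ plugging in $\hat{\fgamma}$ is asymptotically negligible; (ii) the unflipped statistic is a normalised sum of independent mean-zero contributions, covered by a Lindeberg CLT; (iii) for $j\ge 2$, conditionally on the data, $n^{-1/2}\sum_i g_{ji}\nu^{*}_{\fgamma_0,i}$ is again a normalised sum of independent mean-zero terms, with conditional variance $n^{-1}\sum_i(\nu^{*}_{\fgamma_0,i})^2\to\sigma^2$ almost surely by the law of large numbers, so a conditional CLT delivers the same $\mathcal N(0,\sigma^2)$ limit; and since the flips are mutually independent and independent of the data, the conditional law of $(S^{*,2}_{\hat{\fgamma}},\dots,S^{*,w}_{\hat{\fgamma}})$ given the data converges to a fixed product law, which forces joint convergence of the whole vector to i.i.d.\ limits.

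The part that is genuinely new, and where I expect the work to lie, is the behaviour of the standardizing variances. Writing $a=(I-P)W^{1/2}X$ and using $(I-P)^2=I-P$ together with $G_j^2=I$, the standardizing variance in the denominator of \eqref{eq_flipstandard} simplifies to $Var\{S^{*,j}_{\hat{\fgamma}}\}=n^{-1}(G_j a)^{T}(I-P)(G_j a)=n^{-1}\|a\|^2-n^{-1}(G_j a)^{T}P(G_j a)$, so that $Var\{S^{*,1}_{\hat{\fgamma}}\}=n^{-1}\|a\|^2$ and it is enough to show the correction term vanishes in probability for each fixed $j\ge 2$. Since $P$ is an orthogonal projection of fixed rank $k-1$, with diagonal entries in $[0,1]$ summing to $k-1$, averaging over the independent signs gives $\mathbb E_{G_j}\big[(G_j a)^{T}P(G_j a)\big]=\sum_l P_{ll}a_l^2\le (k-1)\max_l a_l^2$, which is $o(n)$ under the (Lindeberg-type) negligibility condition on the coordinates of $a$ already required for the CLTs above; since $(G_j a)^{T}P(G_j a)\ge 0$, Markov's inequality then yields $n^{-1}(G_j a)^{T}P(G_j a)\xrightarrow{p}0$, whence $Var\{S^{*,j}_{\hat{\fgamma}}\}\xrightarrow{p}\sigma_\star^2:=\lim_n n^{-1}\|a\|^2>0$ for \emph{every} $j$ (non-degeneracy $\sigma_\star^2>0$ being a standing assumption, and the passage from $W$ to $\hat W$ costing only a lower-order term, as noted in the text).

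Putting the two pieces together by a Slutsky (converging-together) argument, $(S^{1}_{\hat{\fgamma}},\dots,S^{w}_{\hat{\fgamma}})$ converges in distribution to $(Z_1,\dots,Z_w)/\sigma_\star$, i.e.\ to $w$ i.i.d.\ $\mathcal N(0,\sigma^2/\sigma_\star^2)$ variables, which is exactly the continuous, exchangeable limit asked for at the outset; under variance misspecification one need not have $\sigma^2=\sigma_\star^2$, but this is harmless, since all $w$ statistics share a common continuous limiting law, and it is precisely this feature that produces the advertised robustness. The main obstacle is therefore the uniform-over-flips control of the standardizing variances: the key observation is that the sign-flipped variance differs from the unflipped one only through a quadratic form in the fixed-rank projection $P$, and this form is negligible once the coordinates of $a=(I-P)W^{1/2}X$ obey the Lindeberg-type condition already in force; the rest is bookkeeping around \citet{hemerik2020robust} and \citet{de2022inference}.
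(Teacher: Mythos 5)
You should know that the paper does not actually prove Theorem~\ref{signflipstd}: it is imported as Proposition~1 of \citet{de2022inference}, so there is no in-paper argument to compare yours against. Judged on its own merits, your sketch is correct and reconstructs essentially the strategy of the cited source: reduce to the effective-score situation of Theorem~\ref{signflipeff} (joint convergence of the $w$ statistics to an exchangeable, almost surely tie-free limit, then the elementary order-statistic computation giving $\lfloor\alpha w\rfloor/w$) by showing that the flip-dependent standardizing variances in \eqref{eq_flipstandard} all converge in probability to one and the same positive constant; your key observation that the flipped variance differs from the unflipped one only through the quadratic form $(G_j a)^T P (G_j a)$ with $a=(I-P)W^{1/2}X$ and $P$ of fixed rank $k-1$, so that $n^{-1}(G_j a)^T P (G_j a)\to_p 0$ by the mean bound $(k-1)\max_i a_i^2$ and Markov, is exactly the right mechanism, and the Slutsky step then hands the problem back to the argument behind Theorem~\ref{signflipeff}. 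Two points would need tightening in a full write-up. First, the negligibility $\max_i a_i^2=o(n)$ and the existence of positive limits for $n^{-1}\|a\|^2$ and for the true variance of the contributions are not free: under variance misspecification the score contributions are independent but not identically distributed, so the CLT is Lindeberg--Feller and the law of large numbers for $n^{-1}\sum_i(\nu^*_{\gamma_0,i})^2$ needs its own condition; these are part of the standing assumptions of \citet{hemerik2020robust} and \citet{de2022inference}, but they are doing real work and should be invoked explicitly. Second, the nuisance-estimation step should be stated as $S^{*,j}_{\hat{\gamma}}=S^{*,j}_{\gamma_0}+o_p(1)$ for the already $n^{-1/2}$-normalized statistics (this is what the proof of Theorem~2 of \citet{hemerik2020robust} provides and what Lemma~\ref{lemmascorematrix} of this paper invokes); your version with an additional $\sqrt{n}$ factor claims more than is available, though nothing in your argument uses that extra strength. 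With these caveats, your proposal is a sound proof sketch of the quoted result.
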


Some robustness properties of this test are discussed in \citet{de2022inference}. In particular, the proposed test is proven to be asymptotically exact in case of general variance misspecification under minimal assumptions.

\section{Multiple dependent variables} \label{sec:multiv}
We now generalize the results of Section \ref{Sect:univartest} to the case of multiple generalized linear models fitted in parallel.
Suppose there are $m\geq 2$ dependent variables $Y^1,\dots,Y^m$ and that for each response $Y^l$ we have $n$ independent observations $Y^l_1,\dots,Y^l_n$, which follow some model in the exponential dispersion family. 
We consider $m$ null hypotheses $H_1,\dots,H_m$, where $H_l$ is the hypothesis that $\beta^l=\beta^l_0$.
For the $l$-th model we assume the same assumptions of Section \ref{Sect:univartest}.

This section introduces the multiple testing procedure. It involves computing the effective score for each of the dependent variable $Y^1,\dots,Y^m$. The effective score for the $l$-th response is then 
\begin{equation*}
    S^{*,l}_{\hat{\fgamma}}=n^{-1/2}X' ({W}^l)^{1/2} (I-{P}^l)({V}^l)^{-1/2}(Y^l-\hat{\mu}^l)=n^{-1/2}\sum_{i=1}^{n}\nu^{*l}_{\hat{\fgamma},i},
\end{equation*}
where the superscript $l$ indicates that the corresponding quantities have been for the $l$th model as in Section \ref{Sect:univartest}. Analogously to Section \ref{Sect:univartest}, we define the sign-flipped effective score test statistic for a generic flip matrix $G_j$ as
\begin{equation*}
S^{*,j,l}_{\hat{\fgamma}}=n^{-1/2}X' ({W}^l)^{1/2} (I-{P}^l)({V}^l)^{-1/2}G_j(Y^l-\hat{\mu}^l)=n^{-1/2}\sum_{i=1}^{n}g_{ji}\nu^{*l}_{\hat{\fgamma},i}.
\end{equation*}
Using the same sign-flip for each model allows the procedure to preserve the dependence structure. Moreover, we have
\begin{equation*}
    Var\left\{S^{*,j,l}_{\hat{\fgamma}}\right\}=X'({W}^l)^{1/2}(I-{P^l})G_j(I-{P^l})G_j(I-{P^l})({W}^l)^{1/2}X.
\end{equation*}
This allows us to define the standardized score as
\begin{equation*}
    S^{j,l}_{\hat{\fgamma}}= S^{*,j,l}_{\hat{\fgamma}}Var\left\{S^{*,j,l}_{\hat{\fgamma}}\right\}^{-1/2}.
\end{equation*}
We can build a multivariate test statistic which takes into account the standardization of the joint variance-covariance matrix as follows. Let
\begin{equation}\label{eq:globalnull}
    H_L= \bigcap_{l \in L} H_l: \beta^l=\beta^l_0
\end{equation}
where $L$ is a fixed set containing any combination of the $m$ hypotheses.
Note that $H_L$ can be either a global or a partial null hypothesis for a subset of parameters. For simplicity, we will derive the procedure for the global null hypothesis, but it is exactly analogous for partial null hypotheses of any kind. 

A first idea to perform the test might be given by the following full standardization approach which builds a global test statistic based on the form of a Mahalanobis distance as follows.
Let $S^{*,j,L}_{\hat{\fgamma}}$ be the $m$-dimensional vector with elements $S^{*,j,l}_{\hat{\fgamma}}$, $1 \le l \le m$. Let $Var\{S^{*,j,L}_{\hat{\fgamma}}\}$ denote the corresponding variance-covariance matrix, which has dimension $m \times m$ and is assumed (for the moment) known. Let the joint test statistic be
\begin{equation}\label{eq:full_standardized}
    T_{j,L}^n=\left(S^{*,j,L}_{\hat{\fgamma}}\right)' Var\left\{S^{*,j,L}_{\hat{\fgamma}}\right\}^{-1} \left(S^{*,j,L}_{\hat{\fgamma}}\right).
\end{equation}
for $1\le j\le w$. It can be proven that an asymptotic $\alpha$-level test can be built with a reasoning analogous to Theorem \ref{signflipstd}.

However there are some issues with this approach. In particular for each flip it is necessary to invert an $m \times m$ matrix, which becomes infeasible for large values of $m$. Moreover, it requires estimating the correlation between responses, which must be assumed to be known except for a limited number of parameters; for instance, we might choose to assume the correlation of the responses to be equal between different observations. Finally, in order to perform an overall analysis with post-hoc validity, we might choose a closed testing approach \citep{marcus1976closed}, which requires to perform all the $2^m$ possible tests, which can be very demanding for growing $m$.

A fast alternative, which is more appealing for large values of $m$, consists of doing marginal standardization of the test statistic. We will start from the effective scores and then proceed to the standardized scores showing that we are able to obtain asymptotically valid inference for both procedures.

Let $\bm{M}^n$ be the $w$-by-$m$ matrix with $(j,l)$-th entry equal to $S^{*j,l}_{\hat{\fgamma}}$. 
The following lemma will be fundamental in proving that the proposed multiple testing methods are asymptotically exact. 



\begin{lemma} \label{lemmascorematrix}
Let $\bm{M}^n$ as defined above. Then, for $n\rightarrow\infty$, $\bm{M}^n$ converges in distribution to $\bm{M}$, where  all rows of $\bm{M}$ have the same multivariate normal distribution.
\end{lemma}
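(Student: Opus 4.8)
The plan is to prove the statement in two moves: first, to replace the plug-in score contributions by the ones evaluated at the true nuisance parameter, and second, to recognise the resulting matrix --- after stacking its entries into a vector --- as a normalised sum of independent random vectors, to which a multivariate Lindeberg--Feller central limit theorem applies.

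For the first move I would show that, up to an $o_p(1)$ error that is uniform over the finitely many flips $j\le w$ and models $l\le m$, the $(j,l)$ entry of $\bm{M}^n$ equals $n^{-1/2}\sum_{i=1}^n g_{ji}\,\nu^{*l}_{\fgamma_0,i}$, where $\nu^{*l}_{\fgamma_0,i}=a^l_i\,(Y^l_i-\mu^l_i)$ and the coefficient $a^l_i=[X'({W}^l)^{1/2}(I-{P}^l)({V}^l)^{-1/2}]_i$ is evaluated at the model-$l$ null parameter values, hence deterministic. This is exactly the asymptotic equivalence that underlies the univariate Theorems~\ref{signflipeff} and \ref{signflipstd}, established in \citet{hemerik2020robust} and \citet{de2022inference}; since $w$ and $m$ are fixed it transfers entrywise to the whole matrix. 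Its payoff is that the contributions $\nu^{*l}_{\fgamma_0,1},\dots,\nu^{*l}_{\fgamma_0,n}$ are genuinely independent across $i$ --- the only randomness left is in the $Y^l_i$ --- and that, under the null hypothesis, each of them has mean zero.

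For the second move I would collect the $wm$ numbers $g_{ji}\,\nu^{*l}_{\fgamma_0,i}$ into a single vector $\xi_i\in\mathbb{R}^{wm}$, so that the reduced matrix is $n^{-1/2}\sum_{i=1}^n\xi_i$. Since the flip vectors $g_2,\dots,g_w$ are mutually independent, each with i.i.d.\ Rademacher entries, and independent of the data, while the observation vectors $(Y^1_i,\dots,Y^m_i)$ are independent across $i$, the $\xi_1,\dots,\xi_n$ are independent and (under the null) mean zero. Using $g_{ji}^2=1$ and $\mathbb{E}(g_{ji}g_{j'i})=\mathbf{1}\{j=j'\}$, a direct computation gives $\mathrm{Cov}\bigl((\xi_i)_{jl},(\xi_i)_{j'l'}\bigr)=\mathbf{1}\{j=j'\}\,a^l_i a^{l'}_i\,\mathrm{Cov}(Y^l_i,Y^{l'}_i)$: the same $m\times m$ matrix $\Sigma_i$ appears in each diagonal $j$-block, and the off-diagonal $j$-blocks vanish. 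Under the standing regularity conditions of \citet{de2022inference} one has $n^{-1}\sum_{i=1}^n\Sigma_i\to\Sigma$ for some positive semidefinite $\Sigma$, and, since $\|\xi_i\|^2=w\sum_{l}(\nu^{*l}_{\fgamma_0,i})^2$, the Lindeberg condition for the array $\{\xi_i\}$ reduces to the one already used for Theorems~\ref{signflipeff}--\ref{signflipstd}. The multivariate Lindeberg--Feller theorem then yields convergence in distribution of $\bm{M}^n$ to a Gaussian matrix $\bm{M}$ with $\mathrm{Cov}(\bm{M}_{jl},\bm{M}_{j'l'})=\mathbf{1}\{j=j'\}\,\Sigma_{ll'}$; in particular every row of $\bm{M}$ has the common distribution $\mathcal{N}(\mathbf{0},\Sigma)$ (and in fact distinct rows are independent), which is the claim.

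The step I expect to be the main obstacle is the uniform-over-flips justification in the first move. For the identity flip the leading error term is $n^{-1/2}(\hat{\fgamma}-\fgamma_0)'\sum_i\partial_{\fgamma}\nu^{*l}_{\fgamma_0,i}$, and the orthogonality of the effective score to the nuisance directions makes $\sum_i\partial_{\fgamma}\nu^{*l}_{\fgamma_0,i}$ a centered sum, hence $O_p(n^{1/2})$; together with the $\sqrt n$-consistency of $\hat{\fgamma}$ this is $O_p(n^{-1/2})$. For a generic flip the error term is $n^{-1/2}(\hat{\fgamma}-\fgamma_0)'\sum_i g_{ji}\,\partial_{\fgamma}\nu^{*l}_{\fgamma_0,i}$, whose inner sum, conditionally on the data, has variance $O_p(n)$ and is therefore $O_p(n^{1/2})$ as well, so the same bound follows; uniformity over the finitely many flips is then automatic. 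Making this bookkeeping fully rigorous --- including the analogous contributions coming from the dependence of $a^l_i$ and $\hat{\mu}^l_i$ on $\hat{\fgamma}$ --- is the delicate part, and I would import it from \citet{de2022inference}. A secondary point is to confirm that the regularity assumptions there are strong enough to deliver both the multivariate Lindeberg condition and the convergence $n^{-1}\sum_i\Sigma_i\to\Sigma$.
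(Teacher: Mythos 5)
Your proposal is correct and follows essentially the same route as the paper's proof: reduce to the matrix of (flipped) effective scores evaluated at the true nuisance parameter via the $o_p(1)$ equivalence $S^{*,j,l}_{\hat{\fgamma}}= S^{*,j,l}_{\fgamma_0}+ o_{p}(1)$ imported from \citet{hemerik2020robust} and \citet{de2022inference}, then apply a multivariate central limit theorem to the independent flipped score contributions. You simply carry out the two steps in the opposite order and with more explicit bookkeeping (the block-diagonal covariance of the stacked vector and the Lindeberg condition), which the paper leaves implicit.
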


\begin{proof}
Let $\bm{M}^n_{0}$ be the $w$-by-$m$ matrix with $(j,l)$-th entry equal to $S^{*,j,l}_{\fgamma_0}$. Note that $\bm{M}^n_{0}$ is based on knowledge of the true nuisance parameters $\fgamma_0$, where $S^{*,j,l}_{\fgamma_0}$ is the corresponding effective score. The consequence is that each entry of the matrix $\bm{M}^n_{0}$ is the sum of $n$ independent (flipped) score contributions \citep{hemerik2020robust}. 
In particular each row of $\bm{M}^n_{0}$ is uncorrelated with the other rows, due to the independence of the flips.
Moreover, the correlation structure within each row coincides with the correlation structure of the contributions 
$\nu^{*1}_{\fgamma_0},\dots, \nu^{*m}_{\fgamma_0}$.
This follows as, the sum of $n$ independent vectors with the same correlation structure has the same correlation structure as the summands, and dividing by $n^{1/2}$ leaves the correlation structure unchanged.
Consequently, the multivariate central limit theorem \citep{van1998asymptotic} implies that $\bm{M}^n_{0}$ converges in distribution to some matrix $\bm{M}_{0}$, which has identically distributed multivariate normal rows.

Now it remains to show that $\bm{M}$, i.e., the matrix based on the \emph{estimated} nuisance parameters, also has identically distributed multivariate normal rows.
As shown in the proof of Theorem 2 in \citet{hemerik2020robust}, we have 
$S^{*,j,l}_{\hat{\fgamma}}= S^{*,j,l}_{\fgamma_0}+ o_{p}(1)$, $1\leq j \leq w$, $1\leq l \leq m$.
 This means that $\bm{M}$ is asymptotically equivalent to $\bm{M}_{0}$. Hence the result holds.
\end{proof}

If we used standardized scores, instead of effective scores, to fill the matrix $\bm{M}^n$, then Lemma \ref{lemmascorematrix} still holds, since the standardized scores are asymptotically equivalent to the effective (unstandardized) scores, see \citep{de2022inference}.

From Lemma \ref{lemmascorematrix} we can build an asymptotic exact $\alpha$-level test for any composite hypothesis as follows. Let $H_L$ be defined as in \eqref{eq:globalnull}. Given any non-decreasing function $\psi:\mathbb{R}^k \rightarrow \mathbb{R}$, for each $1 \leq j \leq w$, define global flipped test-statistics
\begin{equation}\label{eq:teststat}
T^n_j=\psi\left(|S^{j,1}_{\hat{\gamma}}|,\dots, |S^{j,L}_{\hat{\gamma}}|\right).
\end{equation}
The following theorem shows that we can use these test statistics in order to obtain an asymptotic $\alpha$-level test.

\begin{theorem} \label{theorem:multi_test}
For every $1\leq j \leq w$, consider the statistic $T_j^n $ as defined in \eqref{eq:teststat} and let $T^n_{(1)}\leq ...\leq T^n_{(w)}$ be the sorted statistics. Consider the test that rejects if $T_1^n > T^n_{\lceil(1-\alpha)w\rceil}$. 
As $n\rightarrow \infty$,   under $H_L$ the rejection probability   converges to 
$\lfloor \alpha w \rfloor /w \leq \alpha$.
\end{theorem}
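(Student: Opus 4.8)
The plan is to reduce the statement to the joint limiting law of the $w$ vectors of (standardized) scores supplied by Lemma~\ref{lemmascorematrix}, and then to repeat the exchangeability argument already used for Theorems~\ref{signflipeff} and~\ref{signflipstd}. Write $\bm{M}^n_L$ for the $w\times|L|$ submatrix of $\bm{M}^n$ whose $(j,l)$ entry is $S^{j,l}_{\hat{\fgamma}}$ for $l\in L$. Since we work under $H_L$, Lemma~\ref{lemmascorematrix} applies to exactly these columns, and together with the remark that it also holds for standardized scores it gives that $\bm{M}^n_L$ converges in distribution to a matrix $\bm{M}_L$ whose rows $R_1,\dots,R_w$ are exchangeable — in fact i.i.d.\ with a common $N(0,\Sigma_L)$ law, since by the proof of the lemma they are identically distributed and, being jointly Gaussian and mutually uncorrelated, independent.

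First I would push this limit through the continuous mapping theorem. Define $h:(\mathbb{R}^{|L|})^w\to\mathbb{R}^w$ by $h(v_1,\dots,v_w)=(\psi(|v_1|),\dots,\psi(|v_w|))$, where $|v_j|$ is the coordinatewise absolute value, so that $(T^n_1,\dots,T^n_w)=h(\text{rows of }\bm{M}^n_L)$. A coordinatewise non-decreasing $\psi$ is continuous outside a Lebesgue-null set $D$, and since $v\mapsto|v|$ restricts to an isometry on each orthant the set $\{v:|v|\in D\}$ is again null; hence the discontinuity set of $h$ is Lebesgue-null. Assuming $\Sigma_L$ is non-singular (as is implicitly assumed already in the univariate theorems), the law of the rows of $\bm{M}_L$ is absolutely continuous on $(\mathbb{R}^{|L|})^w$ and assigns that set probability zero, so the continuous mapping theorem yields that $(T^n_1,\dots,T^n_w)$ converges in distribution to $(T_1,\dots,T_w):=(\psi(|R_1|),\dots,\psi(|R_w|))$, whose coordinates are exchangeable.

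Then I would finish with the rank computation. Set $k=\lceil(1-\alpha)w\rceil$, so that $w-k=\lfloor\alpha w\rfloor$. The boundary of $\{(t_1,\dots,t_w):t_1>t_{(k)}\}$ is contained in $\bigcup_{j\neq 1}\{t_1=t_j\}$, and under the limiting law this set has probability zero as soon as $T_1-T_j$ has no atom at $0$, which holds whenever $\psi(|R_1|)$ is atomless (e.g.\ $\psi$ continuous, or more generally having no level set of positive Lebesgue measure; this covers the standard choices $\psi(x)=\sum_l x_l$ and $\psi(x)=\max_l x_l$). The rejection region is then a continuity set of the limit, so by the Portmanteau theorem $P(T^n_1>T^n_{(k)})$ converges to $P(T_1>T_{(k)})$; by exchangeability and almost sure distinctness of $T_1,\dots,T_w$, the rank of $T_1$ among them is uniform on $\{1,\dots,w\}$, whence $P(T_1>T_{(k)})=(w-k)/w=\lfloor\alpha w\rfloor/w\le\alpha$.

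The main obstacle lies in this last step rather than in the first two. One has to ensure that the limiting statistics $T_1,\dots,T_w$ are almost surely all distinct, so that the rejection event is a continuity set of the limiting distribution and the rank of the observed statistic is \emph{exactly} uniform; this is precisely where non-degeneracy of $\Sigma_L$ and a mild regularity condition on $\psi$ (atomlessness of $\psi(|R_j|)$) enter, and without them one would only obtain the inequality $\le\lfloor\alpha w\rfloor/w$ rather than the stated limit. Granting those, the continuous mapping step and the exchangeability of the limiting rows from Lemma~\ref{lemmascorematrix} make the remainder routine, mirroring the proofs of Theorems~\ref{signflipeff} and~\ref{signflipstd}.
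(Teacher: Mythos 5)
Your proposal is correct and takes essentially the same route as the paper: Lemma~\ref{lemmascorematrix} yields asymptotically i.i.d.\ (hence exchangeable) flipped statistics, and the stated rejection probability then follows from the standard rank argument for exchangeable statistics. The only difference is that the paper delegates your continuous-mapping, tie-handling and uniform-rank steps to Lemma~1 of \citet{hemerik2020robust}, whereas you re-derive that result explicitly, which usefully makes visible the implicit non-degeneracy and atomlessness conditions needed for the limit to equal $\lfloor\alpha w\rfloor/w$ exactly.
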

\begin{proof}
Lemma \ref{lemmascorematrix} implies that the test statistics $T^n_1,\dots,T^n_w$ are asymptotically independent and identically distributed.
Note that, by the definition of $\psi$, high values of $T_1$ shows evidence against the null hypothesis {$H_L$}. Hence, Lemma 1 of \cite{hemerik2020robust} directly applies and obtain anasymptotic $\alpha$-level test.
\end{proof}
Theorem \ref{theorem:multi_test} implies that we can build an asymptotic valid local test for the hypothesis \eqref{eq:globalnull}, which therefore guarantees weak control of the familywise error rate (FWER), that is, the probability of making any false rejection under the global null hypothesis. 
 


Multiple choices of the function $\psi$ are available \citep{pesarin2001multivariate} and the choice will influence the power properties in different settings.
We can subsequently apply the closed testing approach \citep{marcus1976closed} to build a procedure which guarantees strong control of the FWER by computing the $2^n$ intersection tests, and this procedure is optimal in the sense that every FWER controlling procedure is equal or can be improved by applying the closed testing principle \citep{goeman2021closed}.
The strong control of the FWER ensures control of the probability of making any false rejection for any combination of true and false hypotheses.

However, the number of tests can become unfeasible for large values of $m$. A dramatic shortcut is given by selecting the maximum of the test statistics as combining function  and is referred to as the max-$T$ method. There are roughly two versions of the max-$T$ method by Westfall and Young \citep{westfall1993resampling, westfall2008multiple,meinshausen2011asymptotic}: the single-step method and the sequential method. The single-step method is simpler and faster, while the sequential method is more powerful.
The single-step approach, based on the matrix $\bm{M}^n$ of test statistics, is defined as follows. Here we formulate a version that employs two-sided tests.
For every $1\leq j \leq w$, let $m_j$ be the maximum of the test statistics $|\bm{M}^n_{j,l}|$, $1\leq l \leq m$. Let $m_{(1)},\dots,m_{(w)}$ be the sorted values $m_1,\dots,m_w$. Then the multiple testing method  rejects all hypotheses with index $l$ for which $|\bm{M}^n|_{1,l}> m_{(\lceil (1-\alpha)w\rceil)}$.  
The sequential max-$T$ method is defined as follows; after the first step defined above is completed, the procedure is continued in a step-down way. We remove from the matrix $\bm{M}^n$ all rows corresponding to the hypotheses rejected in the first step, then we reapply the same procedure described above. This process is continued until there are no more rejections. Note that the procedure with standardized test statistic is defined in the same way.

The following theorem states that the single-step and sequential max-$T$ methods provide strong asymptotic FWER control. Write ${\rm FWER}_n$ to indicate potential dependence of the FWER on $n$.
\begin{theorem}
For both the  single-step and sequential max-$T$ method, $\limsup_{n\rightarrow\infty}({\rm FWER}_n) \leq \alpha.$
\end{theorem}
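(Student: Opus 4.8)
The plan is to reduce strong control, for both versions of the procedure, to a single application of Theorem~\ref{theorem:multi_test} with the combining function $\psi=\max$ restricted to the true null hypotheses. Fix a data-generating process and let $L_0\subseteq\{1,\dots,m\}$ be the (fixed, unknown) set of indices of the true hypotheses, so that $H_{L_0}$ holds. For $1\le j\le w$ set $\tilde m_j=\max_{l\in L_0}|\bm{M}^n_{j,l}|$ and let $\tilde m_{(1)}\le\dots\le\tilde m_{(w)}$ be the sorted values. Applying Lemma~\ref{lemmascorematrix} to the submatrix of $\bm{M}^n$ with columns indexed by $L_0$ --- which is legitimate because $H_{L_0}$ holds, the partial-null case being entirely analogous to the global one, and which holds whether $\bm{M}^n$ is filled with effective or standardized scores --- the rows of this submatrix converge jointly in distribution to i.i.d.\ multivariate normal rows. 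Since $\psi=\max$ is non-decreasing, Theorem~\ref{theorem:multi_test} then gives
\[
\limsup_{n\to\infty}\;P\!\left(\tilde m_1>\tilde m_{(\lceil(1-\alpha)w\rceil)}\right)\;\le\;\lfloor\alpha w\rfloor/w\;\le\;\alpha .
\]
It therefore suffices to show, deterministically, that any false rejection produced by either method implies the event $\{\tilde m_1>\tilde m_{(\lceil(1-\alpha)w\rceil)}\}$.

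I will use repeatedly the elementary fact that if $a_j\ge b_j$ for all $j$ then the order statistics satisfy $a_{(k)}\ge b_{(k)}$ for all $k$. For the single-step method, a false rejection means $|\bm{M}^n_{1,l}|>m_{(\lceil(1-\alpha)w\rceil)}$ for some $l\in L_0$; since $L_0\subseteq\{1,\dots,m\}$ we have $m_j\ge\tilde m_j$ for every $j$, hence $m_{(\lceil(1-\alpha)w\rceil)}\ge\tilde m_{(\lceil(1-\alpha)w\rceil)}$, and therefore $\tilde m_1=\max_{l\in L_0}|\bm{M}^n_{1,l}|\ge|\bm{M}^n_{1,l}|>\tilde m_{(\lceil(1-\alpha)w\rceil)}$, the desired event, so that ${\rm FWER}_n\le P(\tilde m_1>\tilde m_{(\lceil(1-\alpha)w\rceil)})$.

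For the sequential method, consider any realization in which some true null is rejected, and look at the first step $t$ at which this occurs. All hypotheses rejected before step $t$ are false, so the set $R$ of hypotheses still present at step $t$ contains $L_0$. Writing $m^{(R)}_j=\max_{l\in R}|\bm{M}^n_{j,l}|$, step $t$ rejects some $l\in L_0\subseteq R$ only if $|\bm{M}^n_{1,l}|>m^{(R)}_{(\lceil(1-\alpha)w\rceil)}$; and $L_0\subseteq R$ gives $m^{(R)}_j\ge\tilde m_j$ for all $j$, hence $m^{(R)}_{(\lceil(1-\alpha)w\rceil)}\ge\tilde m_{(\lceil(1-\alpha)w\rceil)}$, so again $\tilde m_1>\tilde m_{(\lceil(1-\alpha)w\rceil)}$. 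Thus in both cases ${\rm FWER}_n\le P(\tilde m_1>\tilde m_{(\lceil(1-\alpha)w\rceil)})$, and combining with the displayed bound yields $\limsup_{n\to\infty}({\rm FWER}_n)\le\alpha$.

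The only nontrivial ingredient is the first paragraph: that Lemma~\ref{lemmascorematrix}, and hence Theorem~\ref{theorem:multi_test}, may be invoked for the columns in $L_0$ under the partial null $H_{L_0}$ rather than the global null, and that the possibly diverging first-row entries of the false-null columns are harmless --- indeed they can only inflate the thresholds $m_{(k)}$ and $m^{(R)}_{(k)}$, which is exactly the direction needed. The remainder is the standard deterministic step-down bookkeeping familiar from the Westfall--Young max-$T$ procedure.
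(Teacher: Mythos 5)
Your proof is correct, but it takes a genuinely different route from the paper's. The paper argues in the limit: it uses Lemma~\ref{lemmascorematrix} to pass from $\bm{M}^n$ to the limiting matrix $\bm{M}$, observes that the rows of the submatrix of $\bm{M}$ corresponding to the true nulls are exchangeable, invokes Theorem~2 of \citet{goeman2010sequential} to conclude strong FWER control of both the single-step and sequential max-$T$ procedures applied to $\bm{M}$, and then transfers this to $\bm{M}^n$ via the continuous mapping theorem. You instead carry out the classical Westfall--Young reduction at finite $n$: the monotonicity of order statistics shows, deterministically, that any false rejection by either version forces the oracle event $\tilde m_1 > \tilde m_{(\lceil(1-\alpha)w\rceil)}$ computed from the true-null columns only, and that event's probability is controlled by one application of Theorem~\ref{theorem:multi_test} with $\psi=\max$ under the partial null $H_{L_0}$. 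Your route buys self-containedness (no external exchangeability theorem, no continuous-mapping step), it never needs the false-null columns of $\bm{M}^n$ to converge --- under fixed alternatives they typically diverge, and in your argument they enter only through thresholds they can only inflate --- and it makes the step-down bookkeeping explicit where the paper delegates it to the citation. The paper's route buys brevity and a single limiting statement covering both procedures at once. Both arguments share the caveat you correctly flag as the only nontrivial ingredient: Lemma~\ref{lemmascorematrix} and Theorem~\ref{theorem:multi_test} must be applicable to the columns in $L_0$ under the partial null, which the paper asserts is ``exactly analogous'' to the global case.
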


\begin{proof}
For both the single-step and the sequential max-$T$ method, the argument is as follows.
Recall that $\bm{M}^n$ converges in distribution to  $\bm{M}$. Let $\bm{M}_{\N}$ be the submatrix of $\bm{M}$ that only contains the rows corresponding to the true null hypotheses.
If the matrix $\bm{M}$ is used as input for the multiple testing procedure, then the rows of the submatrix $\bm{M}_{\N}$ are exchangeable, i.e. swapping rows does not change the distribution of the matrix. This implies that the maxT method provides strong FWER control \citep[see e.g.][Theorem 2]{goeman2010sequential}.

If instead $\bm{M}^n$ is used as input, then FWER control follows from the continuous mapping theorem \citep{van1998asymptotic}, since  $\bm{M}^n$ converges to $\bm{M}$ in distribution. 
\end{proof}

\section{Simulation Study}\label{sect:sims}

\subsection{Univariate}
\begin{figure}
    \centering
    \includegraphics[width=\linewidth]{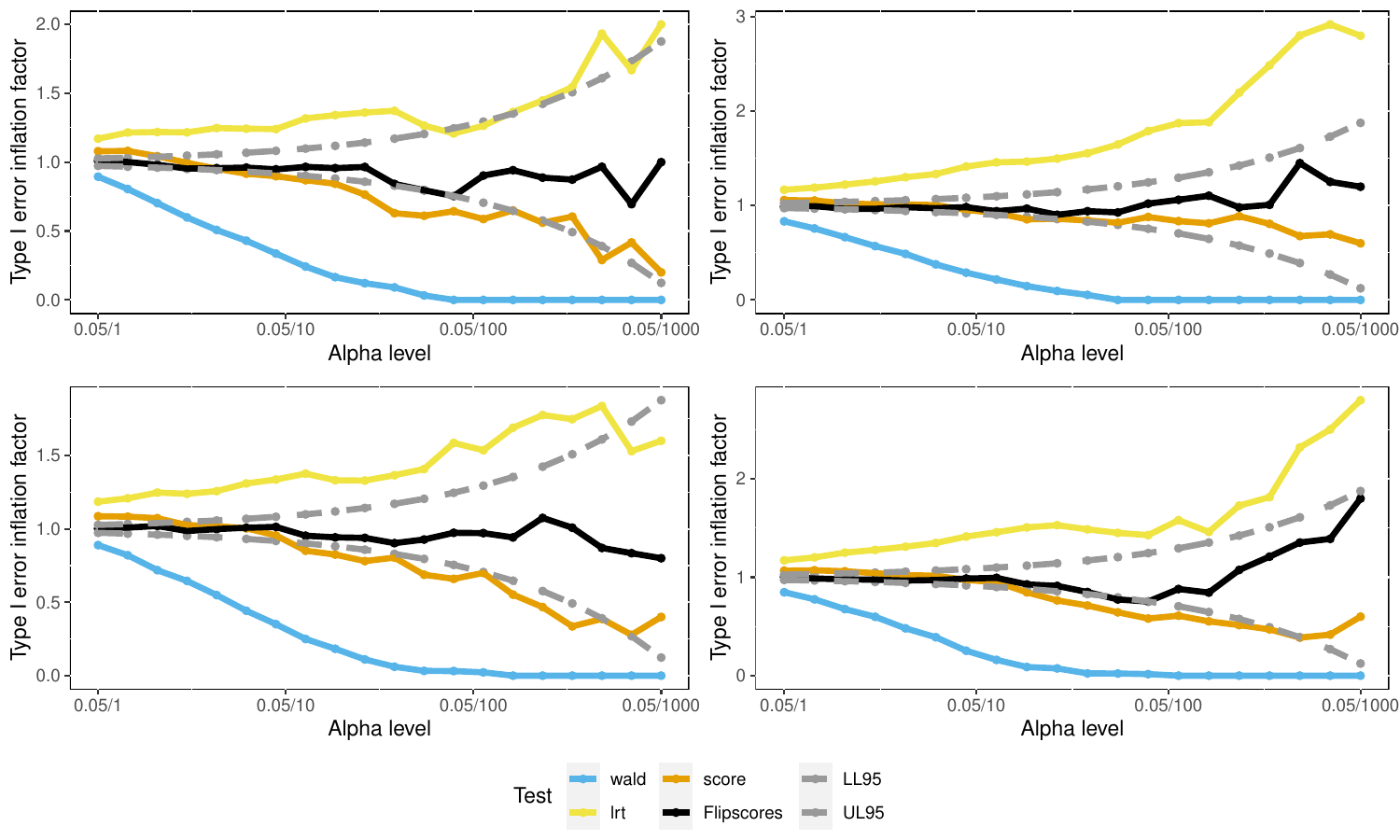}
    \caption{Logit model, univariate. On abscissa the Bonferroni-adjusted alpha for the control of FWER at level 5\%, i.e. .05/m. The flipscores is always inside che confidence bands, while the likelihood ratio test shows an anti-conservative behavior and the other two tests are conservative}
    \label{fig:sim_uni}
\end{figure}

We first show a simulation study for univariate tests in Figure \ref{fig:sim_uni}. The true model is a logistic regression model with one target covariate $X$ and one nuisance covariate $Z$, whose sample size is $n=50$. We test the target parameter $H_0:\beta=0$ setting the true regression parameter $\beta=0.$ Four settings are considered: the top-left has true $\gamma$ (the nuisance parameter) equal to 0 and correlation between $X$ and $Z$ equal to 0, the top-right has true $\gamma$ equal to 1 and correlation between $X$ and $Z$ equal to 0, the bottom-left  has true $\gamma$ equal to 0 and correlation between $X$ and $Z$ equal to 0.5 and the bottom-right has true $\gamma$ equal to 1 and correlation between $X$ and $Z$ equal to 0.5. A total of 100\,000 simulations have been run.
We compare the standardized flipscores approach with three standard parametric competitors, namely the Wald, Score and Likelihood Ratio (LRT) tests.
The flipscores test is based on 2\,000 random flips (we refer to \cite{hemerik2018exact} for a discussion about the use of a limited number of random flips). We plot 95\% simulation confidence bands for the actual control of the significance level.
The $x$ axis represents the significance level as a function of the number of hypotheses tested, while the $y$ axis represents the ratio between the Empirical type I error and the nominal level $\alpha$. A Bonferroni correction is applied to the number of hypotheses tested. Note that this plot aims to show the control of the type I error of each testing approach given the actual significance level of the overall analysis due to the number of tests involved.

We can observe that the Flipscores is generally inside the 95\% simulation confidence bands. The Wald test is very conservative, the parametric score test is slightly conservative while the likelihood ratio test does not control false positives. More simulations can be find in the appendix for the different settings with a sample size of $n=100$, where we observe similar results.

\subsection{Multivariate}
\begin{figure}
    \centering
    \includegraphics[width=\linewidth]{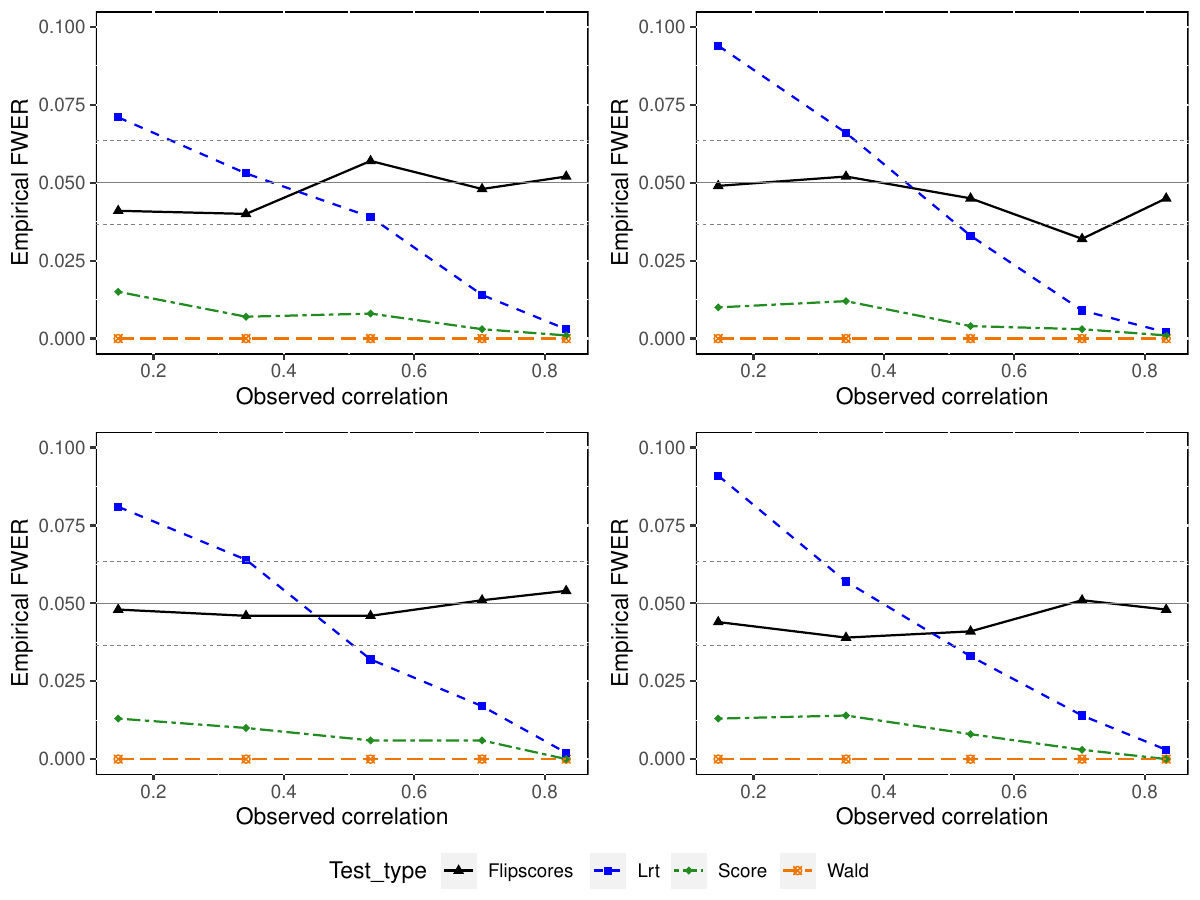}
    \caption{FWER for the multivariate logit model. The flipscores approach control the FWER in each setting. Instead the Wald and score tests are conservative whilst the likelihood ratio test is inflated at low correlations and conservative at high correlations.}
    \label{fig:sim_multi}
\end{figure}

\begin{figure}
    \centering
    \includegraphics[width=\linewidth]{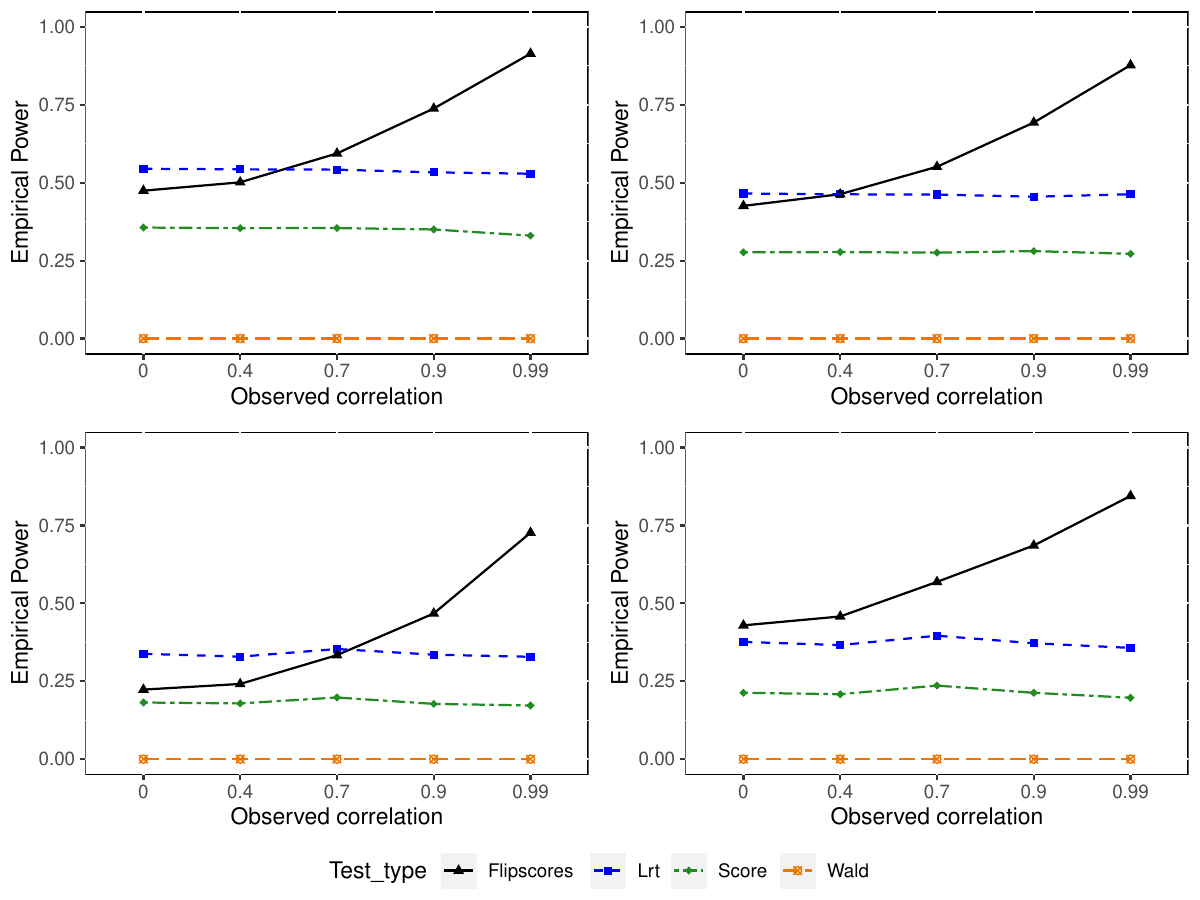}
    \caption{Power for the multivariate logit model. The Flipscores approach is more powerful with respect to the other approaches as the correlation increases.}
    \label{fig:sim_multi2}
\end{figure}

Figures \ref{fig:sim_multi} and \ref{fig:sim_multi2} represent a multivariate simulation.
We set a total of 1\,000 dependent variables. For 20\% they have $\beta=1$ and remaining 80 \% have $\beta=0$. It means that we 1\,000 logistic regression model, one for each response, which are modelled as function of one common target covariate $X$ with parameter $\beta$ and one common nuisance covariate $Z$ with parameter $\gamma$. Four settings are considered: the top-left has true $\gamma$ equal to 0 and correlation between $X$ and $Z$ equal to 0, the top-right has true $\gamma$ equal to $-1$ and correlation between $X$ and $Z$ equal to 0, the bottom-left has true $\gamma$ equal to 0 and correlation between $X$ and $Z$ equal to 0.5 and the bottom-right has true $\gamma$ equal to $-1$ and correlation between $X$ and $Z$ equal to 0.5.

For controlling the FWER we apply the proposed method with the standardized flipscores, using  2\,000 random flips, while the other parametric competitors are corrected with the Bonferroni-Holm procedure.

The $x$ axis of both figures represents the average observed correlation between the dependent variables. 
Figure \ref{fig:sim_multi} represents the empirical FWER of the true null hypothesis. The flipscores is closed to the nominal level, the LRT is anticonservative for low correlation, while the other two methods are highly conservative. Figure \ref{fig:sim_multi2} illustrates the empirical power for the true alternatives. The flipscores approach has the greatest power, especially for higher levels of correlation, as the other methods do not take advantage of the correlation structure.
More simulations can be find in the appendix for the different settings with a sample size of $n=100$, where we observe similar results.

\section{Conclusion}\label{sec:concl}

This paper builds on recent state-of-the-art developments in high-dimensional inference and semi-parametric statistics. We provide the first permutation-type approach for powerful, robust multiple testing in GLMs with many responses. This represents an important step in the development of permutation methods for complex data.

In this paper we focus on the control of the FWER and on the power properties of the multiple testing procedure. However, we can do further inference; indeed, the procedure based on the sign-flip test can be easily extended in order to estimate a lower bound for the true discovery proportion (TDP), that is, the proportion of non-null coefficients over a pre-specified group of hypotheses, using a permutation-based framework \citep{goeman2011multiple, blain2022notip, andreella2023permutation, vesely2023permutation}.

In future work, we aim to zoom in on applications of our approach to neuroimaging data and RNA-Seq data. Since GLMs are so widely applicable, we expect there will be many more applications where our approach proves to be useful.


\bibliography{references.bib}
\bibliographystyle{plainnat}

\newpage

\section{Appendix A}

\begin{figure}
    \centering
    \includegraphics[width=\linewidth]{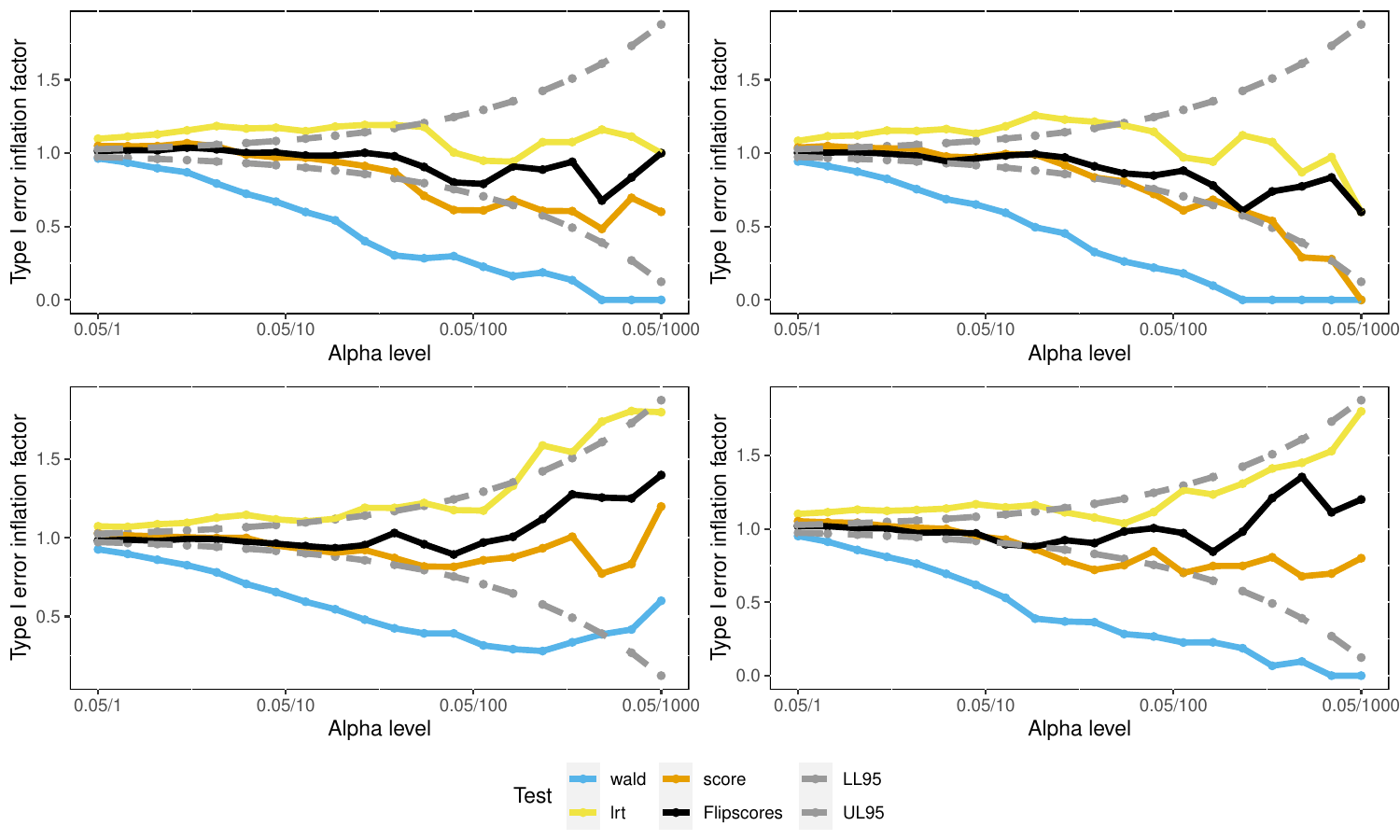}
    \caption{Logit model, univariate. The results can be interpreted similarly to those shown in Figure \ref{fig:sim_uni}, with some improvement of the parametric competitors}
    \label{fig:sim_uni2}
\end{figure}

\begin{figure}
    \centering
    \includegraphics[width=\linewidth]{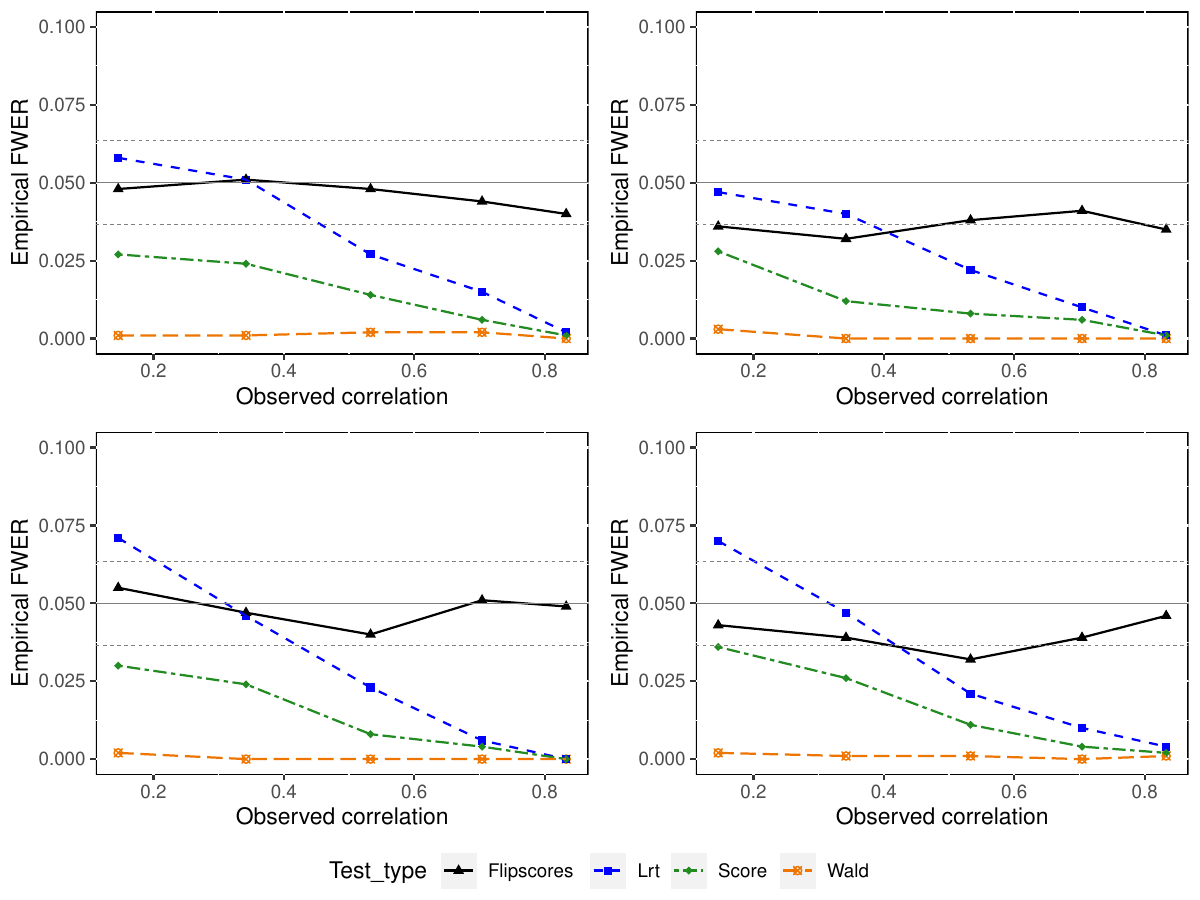}
    \caption{FWER for the multivariate logit model. The results can be interpreted similarly to those shown in Figure \ref{fig:sim_multi}}
    \label{fig:sim_multi3}
\end{figure}

\begin{figure}
    \centering
    \includegraphics[width=\linewidth]{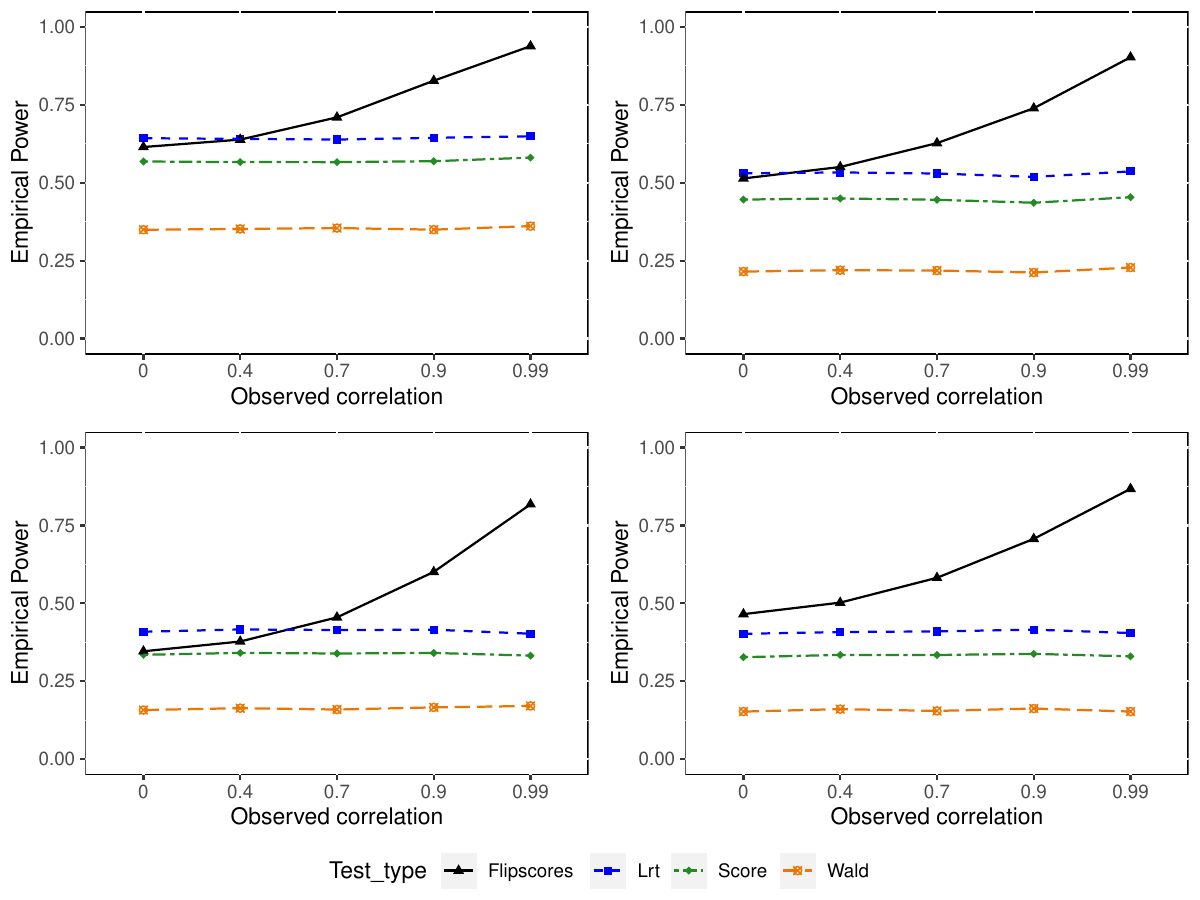}
    \caption{Power for the multivariate logit model. The results can be interpreted similarly to those shown in Figure \ref{fig:sim_multi2}}
    \label{fig:sim_multi4}
\end{figure}

\end{document}